\crefname{lem}{Lemma}{Lemmas}
\crefname{thm}{Theorem}{Theorems}
\crefname{cor}{Corollary}{Corollaries}
\crefname{prop}{Proposition}{Propositions}
\crefname{conj}{Conjecture}{Conjectures}
\crefname{open}{Open Problem}{Open Problems}
\crefname{obs}{Observation}{Observations}
\DeclareMathOperator{\sn}{sn}
\DeclareMathOperator{\qn}{qn}
\DeclareMathOperator{\sqn}{sqn}
\DeclareMathOperator{\dsn}{dsn}
\renewcommand{\le}{\leqslant}
\renewcommand{\leq}{\leqslant}
\renewcommand{\ge}{\geqslant}
\renewcommand{\geq}{\geqslant}
\newcommand{\CartProd}{\mathbin{\square}}
\title{\MakeUppercase{Stack-Number is not Bounded by Queue-Number}}
\author{%
	Vida Dujmovi\'c,\!\!%
	\thanks{School of Computer Science and Electrical Engineering,
		University of Ottawa, Ottawa, Canada (\texttt{vida.dujmovic@uottawa.ca}).
		Research supported by NSERC.}
	\,\,
	David Eppstein,\!\!%
	\thanks{Department of Computer Science, University of California, Irvine, California, USA (\texttt{eppstein@uci.edu}).}
	\,\,
	Robert Hickingbotham,\!\!%
	\thanks{School of Mathematics, Monash University, Melbourne, Australia (\texttt{robert.hickingbotham@monash.edu}).}
	\,\,
	Pat Morin,\!\!%
	\thanks{School of Computer Science, Carleton University, Ottawa, Canada (\texttt{morin@scs.carleton.ca}). Research  supported by NSERC.}
	\,\,
	David R. Wood\thanks{School of Mathematics, Monash University, Melbourne, Australia (\texttt{david.wood@monash.edu}). Research supported by the Australian Research Council.}
}
\begin{document}
\maketitle

\begin{abstract}
We describe a family of graphs with queue-number at most 4 but unbounded stack-number. This resolves open problems of Heath, Leighton and Rosenberg (1992) and Blankenship and Oporowski (1999).
\end{abstract}

\bigskip

\section{Introduction}

Stacks and queues are fundamental data structures in computer science, but which is more powerful? In 1992, Heath, Leighton and Rosenberg~\cite{HLR92,HR92} introduced an approach for answering this question by defining the graph parameters \textit{stack-number} and \textit{queue-number} (defined below), which respectively measure the power of stacks and queues for representing graphs. The following fundamental questions, implicit in \citep{HLR92,HR92}, were made explicit by \citet{DujWoo05}\footnote{A \emph{graph parameter} is a function $\alpha$ such that $\alpha(G)\in\mathbb{R}$ for every graph $G$ and such that $\alpha(G_1)=\alpha(G_2)$ for all isomorphic graphs $G_1$ and $G_2$. A graph parameter $\alpha$ is \textit{bounded} by a graph parameter $\beta$ if there exists a function $f$ such that $\alpha(G) \leq f(\beta(G))$ for every graph $G$.}:
\begin{compactitem}
	\item Is stack-number bounded by queue-number?
	\item Is queue-number bounded by stack-number?
\end{compactitem}

If stack-number is bounded by queue-number but queue-number is not bounded by stack-number, then stacks would be considered to be more powerful than queues. Similarly, if the converse holds, then queues would be considered to be more powerful than stacks. Despite extensive research on stack- and queue-numbers, these questions have remained unsolved.


We now formally define stack- and queue-number. Let $G$ be a graph and let $\prec$ be a total order on $V(G)$.  Two disjoint edges $vw,xy\in E(G)$ with $v\prec w$ and $x\prec y$ \emph{cross} with respect to $\prec$ if $v\prec x\prec w\prec y$ or $x\prec v\prec y\prec w$, and \emph{nest} with respect to $\prec$ if $v\prec x\prec y\prec w$ or $x\prec v\prec w\prec y$. Consider a function $\varphi:E(G)\to\{1,\ldots,k\}$ for some $k\in\N$. Then $(\prec,\varphi)$ is a \emph{$k$-stack layout} of $G$ if $vw$ and $xy$ do not cross for all edges $vw,xy\in E(G)$ with $\varphi(vw) = \varphi(xy)$. Similarly, $(\prec,\varphi)$ is a \emph{$k$-queue layout} of $G$ if $vw$ and $xy$ do not nest for all edges $vw,xy\in E(G)$ with  $\varphi(vw)=\varphi(xy)$. See \cref{layouts} for examples. The smallest integer $s$ for which $G$ has an $s$-stack layout is called the \emph{stack-number} of $G$, denoted  $\sn(G)$. The smallest integer $q$ for which $G$ has a $q$-queue layout is called the \emph{queue-number} of $G$, denoted $\qn(G)$.

\begin{figure}[H]
	\centering
	\includegraphics{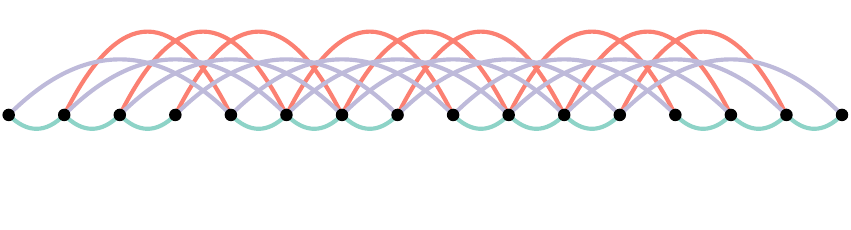}
	\includegraphics{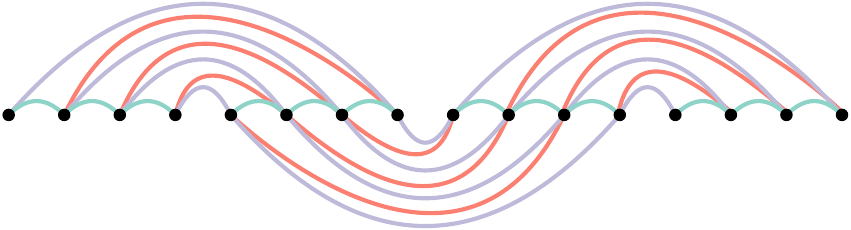}
	\caption{A $2$-queue layout and a $2$-stack layout of the triangulated grid graph $H_4$ defined below. Edges drawn above the vertices are assigned to the first queue/stack and edges drawn below the vertices are assigned to the second queue/stack.}
	\label{layouts}
\end{figure}

Given a $k$-stack layout $(\prec,\varphi)$ of a graph $G$, for each $i\in\{1,\dots,k\}$, the set $\varphi^{-1}(i)$ behaves like a stack, in the sense that each edge $vw \in \varphi^{-1}(i)$ with $v\prec w$ corresponds to an element in a sequence of stack operations, such that if we traverse the vertices in the order of $\prec$, then $vw$ is pushed onto the stack at $v$ and popped off the stack at $w$. Similarly, each set $\varphi^{-1}(i)$ in a queue layout  behaves like a queue. In this way, the stack-number and queue-number  respectively measure the power of stacks and queues to represent graphs.

Note that stack layouts are equivalent to book embeddings (first defined by \citet{Ollmann73} in 1973), and stack-number is also known as \emph{page-number}, \emph{book-thickness} or \emph{fixed outer-thickness}. Stack and queue layouts have other applications including computational complexity~\citep{GKS89,DSW16,Bourgain09,BY13},  RNA folding~\citep{HS99}, graph drawing in two~\citep{BB04,ADFPR12,SSSV-JGT96} and three dimensions~\citep{DMW05,Wood-GD01,DMW17,DPW04},
and fault-tolerant multiprocessing~\citep{CLR87,Rosenberg83a,Rosenberg86a,Rosenberg86}.
See \citep{BK79,Blankenship-PhD03,DujWoo04,DujWoo-DCG07,DJMMUW20,DFP13,BFGMMRU19,Yannakakis89,Yann20,MBKPRU20} for bounds on the stack- and queue-number for various graph classes.

\subsection*{Is Stack-Number Bounded by Queue-Number?}

This paper considers the first of the questions from the start of the paper. In a positive direction, \citet{HLR92}  showed that every 1-queue graph has a $2$-stack layout. On the other hand, they described graphs that need exponentially more stacks than queues. In particular, $n$-vertex ternary hypercubes have queue-number $O(\log n)$ and stack-number $\Omega(n^{1/9-\epsilon})$ for any $\epsilon>0$.

Our key contribution is the following theorem, which shows that stack-number is not bounded by queue-number.

\begin{thm}\label{family}
	For every $s\in \N$ there exists a graph $G$ with $\qn(G)\le 4$ and $\sn(G)>s$.
\end{thm}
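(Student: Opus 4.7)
My plan is to exhibit an explicit family of graphs $(G_n)_{n\ge 1}$ with $\qn(G_n)\le 4$ for every $n$ and $\sn(G_n)\to\infty$ as $n\to\infty$. Motivated by the triangulated grid graph $H_4$ appearing in \cref{layouts}, natural candidates are products of planar (or bounded-treewidth) graphs with long paths, for instance $G_n=H_n\boxtimes P_n$ under the strong product, or the three-dimensional grid $P_n\boxtimes P_n\boxtimes P_n$. The structural intuition is that a low-dimensional product with long paths is rigid enough to admit a short queue layout, yet combinatorially rich enough to force any stack layout to use unboundedly many pages.

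For the queue-number upper bound, I would order vertices lexicographically by their product coordinates and classify every edge by the direction along which it moves in the product. Each direction class should then be assignable to a single queue via a standard layered-product argument, and a careful count tailored to the construction should give the clean bound $\qn(G_n)\le 4$. In fact, since $H_n$ is planar with small queue-number and since queue-number behaves well under strong products with paths, I expect that known product inequalities do most of the work here.

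The main obstacle is the stack-number lower bound. Generic tools such as density, Euler genus, and treewidth do not distinguish $G_n$ from graphs of constant stack-number: planar graphs already satisfy $\sn\le 4$ by Yannakakis' theorem regardless of their size. So the argument must exploit the specific large-scale geometry of $G_n$. My plan is to fix an arbitrary $s$-stack layout $(\prec,\varphi)$ of $G_n$ and examine balanced cuts of $\prec$. The product structure supplies many vertex-disjoint paths across any such cut, and the non-crossing condition within each stack forces the edges of that stack crossing the cut into a laminar (nested) family. A pigeonhole or Erd\H{o}s--Szekeres--type extraction, applied to the combinatorial patterns these laminar families produce across many parallel cuts, should eventually contradict the hypothesis $s=O(1)$ and yield $\sn(G_n)\to\infty$ with $n$.

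Should this direct extraction prove too weak, a natural fallback is to show that $G_n$ contains, in a stack-preserving sense, a substructure of known large stack-number (such as a subdivision or minor of $K_m$, or a complete bipartite arrangement), thereby reducing the lower bound to an established one. Either way, the real difficulty is locating the combinatorial witness inside the product that is robust to arbitrary linear orderings, and this is the step where I expect the bulk of the technical work to lie.
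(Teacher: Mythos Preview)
Your proposal has a genuine gap at the choice of graph. The paper does not use $H_n\boxtimes P_n$ or $P_n^{\boxtimes 3}$; it uses $G=S_b\CartProd H_n$, the cartesian product of a large \emph{star} with the triangulated grid. The star is not decorative: its root $r$ is a vertex of unbounded degree adjacent to every leaf, and this is exactly what, after several reductions, yields a $K_{n,d'}$-like configuration in which $s+1$ pairwise crossing edges can be exhibited. Your candidates have bounded maximum degree; at the time of this paper it was open (and explicitly raised in the reflections) whether even $\sn(T\boxtimes P)$ is bounded for trees $T$ and paths $P$, so neither $H_n\boxtimes P_n$ nor $P_n^{\boxtimes 3}$ is known to have unbounded stack-number. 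Your families may simply fail to witness the theorem.

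The bound $\qn\le 4$ in the paper is not obtained by an ad hoc lexicographic layout but by Wood's inequality $\qn(G_1\CartProd G_2)\le\qn(G_1)+\sqn(G_2)$ together with $\qn(S_b)=1$ and $\sqn(H_n)\le 3$. For your strong-product candidates there is no particular reason to expect the constant $4$.

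Most importantly, your lower-bound sketch is missing the two structural ingredients that make the paper's argument work. First, the Hex Lemma: every vertex $2$-colouring of $H_n$ contains a monochromatic $n$-vertex path. This is what allows the paper, after Pigeonhole and Erd\H{o}s--Szekeres reductions over the \emph{leaves of the star} (not over cuts of $\prec$), to extract a long path $P\subseteq H_n$ along which the induced orderings of the leaf copies are coherently monotone. Second, the ``twister'' step: in $S_d\CartProd P$ with this coherence, the root vertices $(r,p_j)$ are each adjacent to one vertex in every leaf block, and an interleaving argument (using that edges of a single stack colour cannot cross) forces $\min\{\lfloor d/2^n\rfloor,\lceil n/2\rceil\}$ pairwise crossing edges. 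Your balanced-cut/laminar-family outline supplies neither ingredient, and your fallback---reducing to a substructure of known large stack-number---is circular here: no graph class with bounded queue-number and unbounded stack-number was known prior to this theorem; producing one is precisely the content of the result.
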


This demonstrates that stacks are not more powerful than queues for representing graphs.

\subsection*{Cartesian Products}

As illustrated in \cref{graph}, the graph $G$ in \cref{family} is the cartesian product\footnote{For graphs $G_1$ and $G_2$, the \emph{cartesian product} $G_1\CartProd G_2$ is the graph with vertex set $\{(v_1,v_2): v_1 \in V(G_1), v_2 \in V(G_2)\}$, where $(v_1,v_2)(w_1,w_2)\in E(G_1\CartProd G_2)$ if $v_1=w_1$ and $v_2w_2\in E(G_2)$, or $v_1w_1\in E(G_1)$ and $v_2=w_2$. The \emph{strong product} $G_1\boxtimes G_2$ is the graph obtained from $G_1\CartProd G_2$ by adding the edge $(v_1,v_2)(w_1,w_2)$ whenever $v_1w_1\in E(G_1)$ and $v_2w_2\in E(G_2)$. Note that \citet{Pupyrev20} independently suggested using graph products to show that stack-number is not bounded by queue-number.} $S_b\CartProd H_n$ for sufficiently large $b$ and $n$, where $S_b$ is the star graph with root $r$ and $b$ leaves, and $H_n$ is the dual of the hexagonal grid, defined by
\begin{align*}
V(H_n)  :=\{1,\ldots,n\}^2 \quad \text{ and } \quad
E(H_n) & :=  \{(x,y)(x+1,y):x\in\{1,\ldots,n-1\},\,y\in\{1,\ldots,n\}\} \\
& \qquad \cup \{(x,y)(x,y+1):x\in\{1,\ldots,n\},\,y\in\{1,\ldots,n-1\}\} \\
& \qquad \cup \{(x,y)(x+1,y+1):x,y\in\{1,\ldots,n-1\}\} \enspace .
\end{align*}

\begin{figure}[H]
	\centering
	\begin{tabular}{m{.3\textwidth}m{2ex}m{.2\textwidth}m{2ex}m{.3\textwidth}}
		\includegraphics[width=.3\textwidth]{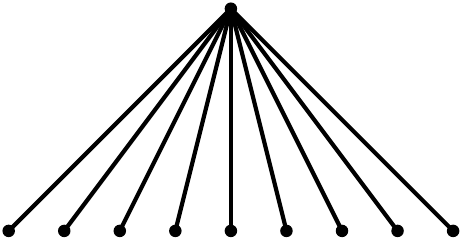} & $\CartProd$ & \includegraphics[width=.2\textwidth]{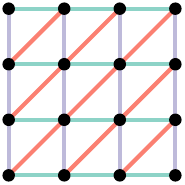} & $=$
		& \includegraphics[width=.3\textwidth]{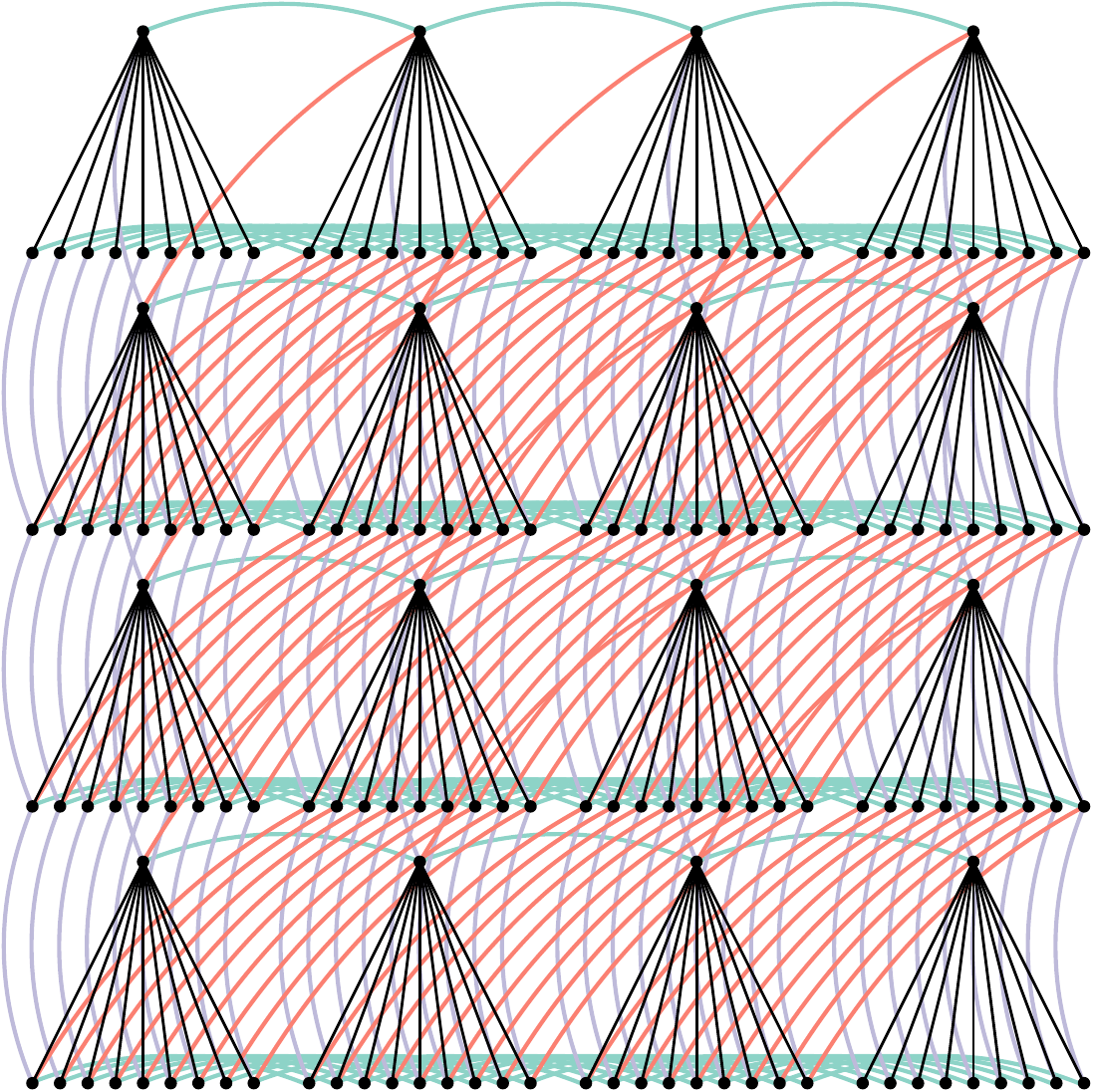}
	\end{tabular}
	\caption{$S_9 \CartProd H_4$.}
	\label{graph}
\end{figure}

We prove the following:

\begin{thm}
\label{Main}
For every $s\in\N$, if $b$ and $n$ are sufficiently large compared to $s$, then $$\sn(S_b\CartProd H_n) > s.$$
\end{thm}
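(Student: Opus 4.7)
\emph{Proof plan.} The plan is to argue by contradiction: suppose $(\prec,\varphi)$ is an $s$-stack layout of $G := S_b \CartProd H_n$ and derive more than $s$ pairwise-crossing edges, which must occupy more than $s$ distinct stacks. Write $H^{(0)}$ for the root copy of $H_n$ and $H^{(1)},\ldots,H^{(b)}$ for the leaf copies; for every $v \in V(H_n)$ and every $i \in \{1,\ldots,b\}$ the star contributes a matching edge $v^{(0)} v^{(i)}$. Each restriction of $(\prec,\varphi)$ to $V(H^{(i)})$ is itself an $s$-stack layout of $H_n$, and the number of such layouts is at most $(n^2)!\cdot s^{|E(H_n)|}$, a finite quantity depending only on $n$ and $s$. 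Hence for $b$ sufficiently large in $n$ and $s$, a standard pigeonhole argument yields a subset $I\subseteq\{1,\ldots,b\}$, still as large as we like, on which every induced layout $(\prec_i,\varphi_i)$ agrees with a single fixed $s$-stack layout $(\prec^*,\varphi^*)$ of $H_n$.

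Next I would regularize the global placement of these aligned copies. For each $i\in I$ I record an ``interleaving signature'' describing how the $n^2$ vertices of $H^{(i)}$ interleave with those of $H^{(0)}$ and with those of the other leaf copies in $\prec$. Applying Erdős--Szekeres- or Ramsey-type arguments to these signatures, I pass to a still-large subset $I'\subseteq I$ on which the copies are arranged in a maximally regular pattern---for instance entirely sequential along $\prec$, or strictly nested one inside the next. The heart of the argument is then a structural rigidity lemma for $s$-stack layouts of $H_n$: for $n$ sufficiently large, the common layout $(\prec^*,\varphi^*)$ must contain some unavoidable obstruction (for example a long rainbow of pairwise-crossing edges, or a long zig-zag pattern through $\prec^*$) forced by the coexistence in $H_n$ of horizontal, vertical, and NE--SW diagonal edges.

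Combining these ingredients, many aligned copies from $I'$ together with a rigid obstruction in $(\prec^*,\varphi^*)$ should produce $s+1$ pairwise-crossing edges among the star matching edges $v^{(0)} v^{(i)}$, where $i$ ranges over $I'$ and $v$ over the vertices participating in the obstruction; that is the desired contradiction. The main obstacle is clearly the rigidity lemma: one must identify a precise unavoidable feature of every $s$-stack layout of $H_n$ and verify that it interacts with the star matchings in a way that is robust to all possible global interleavings of the copies. The triangulated grid $H_n$ is presumably used here (rather than a simpler planar graph such as a path, tree, or axis-aligned grid) precisely because its diagonal edges create enough local crossings to force such an obstruction to exist in the first place.
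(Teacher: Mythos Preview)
Your setup is right in spirit: the paper also argues by contradiction, uses pigeonhole to pass to a large set of leaf copies on which the induced $s$-stack layout of $H_n$ is identical (your first paragraph is essentially the paper's Lemmas~2 and~3), and then applies Erd\H{o}s--Szekeres. But the Erd\H{o}s--Szekeres step does not give a global ``sequential'' or ``nested'' arrangement of copies. What it gives (the paper's Lemma~4) is only \emph{per-vertex} monotonicity: a subsequence $u_1,\ldots,u_d$ of leaves such that for each fixed $p\in V(H_n)$ the sequence $(u_1,p),\ldots,(u_d,p)$ is monotone in $\prec$---but the direction may depend on $p$. Colouring $p$ red or blue according to that direction gives a $2$-colouring of $V(H_n)$, and here is the idea you are missing: one now invokes the \emph{Hex Lemma} (every $2$-colouring of $H_n$ contains a monochromatic $n$-vertex path). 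That is why $H_n$ is the triangulated grid---not to force crossings inside a single copy, but because it is ``far from bipartite'' in exactly this Hex sense.

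Consequently your envisaged ``rigidity lemma for $s$-stack layouts of $H_n$'' is on the wrong track: $H_n$ is planar, hence has stack-number at most $4$, so for $s\ge 4$ there is no intrinsic obstruction in $(\prec^*,\varphi^*)$ alone. The actual obstruction appears only after restricting to the monochromatic path $P=(p_1,\ldots,p_n)$. Since all leaf copies share the colouring $\varphi^*$, the edges $(u_j,p_{i-1})(u_j,p_i)$ lie in one common stack for all $j$; combined with the per-vertex monotonicity this forces the points $\{(u_j,p_{i-1})\}_j$ and $\{(u_j,p_i)\}_j$ to interleave perfectly in $\prec$. An induction on $i$ then yields $d'\ge d/2^{n-1}$ pairwise $\prec$-separated blocks $Z_{n,v_1}\prec\cdots\prec Z_{n,v_{d'}}$, each containing one copy of every $p_j$. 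The root vertices $(r,p_1),\ldots,(r,p_n)$ are each adjacent to a vertex in every block, and this $K_{n,d'}$-like configuration forces $\min\{\lfloor d'/2\rfloor,\lceil n/2\rceil\}$ pairwise-crossing star edges. So the crossings are indeed among the star matchings as you anticipated, but the mechanism producing them is the Hex Lemma plus the interleaving forced by the shared stack assignment along the path, not any feature of the layout of a single $H_n$.
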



We now show that $\qn(S_b\CartProd H_n)\leq 4$, which with \cref{Main} implies \cref{family}. We need the following definition due to \citet{Wood-Queue-DMTCS05}. A queue layout $(\varphi,\prec)$ is \emph{strict} if for every vertex $u\in V(G)$ and for all neighbours $v,w\in N_G(u)$, if $u\prec v\prec w$ or $v\prec w \prec u$, then $\varphi(uv)\neq \varphi(uw)$. Let $\sqn(G)$ be the minimum integer $k$ such that $G$ has a strict $k$-queue layout. To see that $\sqn(H_n) \leq 3$, order the vertices row-by-row and then left-to-right within a row, with vertical edges in one queue, horizontal edges in one queue, and diagonal edges in another queue. \citet{Wood-Queue-DMTCS05} proved that for all graphs $G_1$ and $G_2$,
\begin{equation}
\label{QueueProduct}
\qn(G_1 \CartProd G_2) \leq \qn(G_1) + \sqn(G_2).
\end{equation}
Of course, $S_b$ has a 1-queue layout (since no two edges are nested for any vertex-ordering). Thus $\qn(S_b \CartProd H_n)\leq 4$.

\citet{BK79} implicitly proved a result similar to \cref{QueueProduct} for stack layouts. Let $\dsn(G)$ be the minimum integer $k$ such that $G$ has a $k$-stack layout $(\prec,\varphi)$ where $\varphi$ is a proper edge-colouring of $G$; that is, $\varphi(vx)\neq\varphi(vy)$ for any two edges $vx,vy\in E(G)$ with a common endpoint. Then for every graph $G_1$ and every bipartite graph $G_2$,
\begin{equation}
\label{StackProduct}
\sn(G_1 \CartProd G_2) \leq \sn(G_1) + \dsn(G_2).
\end{equation}
The key difference between \cref{QueueProduct} and \cref{StackProduct} is that $G_2$ is assumed to be bipartite in \cref{StackProduct}. \cref{Main} says that this assumption is essential, since it is easily seen that $(\dsn(H_n))_{n\in\N}$ is bounded, but the stack number of $(S_b \CartProd H_n)_{b,n\in\N}$ is unbounded by \cref{Main}. We choose $H_n$ in \cref{Main} since it satisfies the Hex Lemma (\cref{hex_lemma} below), which quantifies the intuition that $H_n$ is far from being bipartite (while still having bounded queue-number and bounded maximum degree so that \cref{QueueProduct} is applicable). 


\subsection*{Subdivisions}

A noteworthy consequence of \Cref{family} is that it resolves a conjecture of \citet{BO99}. A graph $G'$ is a \textit{subdivision} of a graph $G$ if $G'$ can be obtained from $G$ by replacing the edges $vw$ of $G$ by internally disjoint paths $P_{vw}$ with endpoints $v$ and $w$. If each $P_{vw}$ has exactly $k$ internal vertices, then $G'$ is the \emph{$k$-subdivision} of $G$. If each $P_{vw}$ has at most $k$ internal vertices, then $G'$ is a \emph{$(\leq k)$-subdivision} of $G$. \citet{BO99} conjectured that the stack-number of $(\leq k)$-subdivisions ($k$ fixed)  is not much less than the stack-number of the original graph. More precisely:

\begin{conj}[\citep{BO99}]
\label{B_conj}
There exists a function $f$ such that for every graph $G$ and integer $k$, if $G'$ is any $(\leq k)$-subdivision of $G$, then $\sn(G) \leq f(\sn(G’),k)$.
\end{conj}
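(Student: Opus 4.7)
The plan is to refute \cref{B_conj} by combining \cref{family} with a known bridge between queue layouts and stack layouts of subdivisions. The bridge I have in mind is a result in the Dujmović--Wood line of work (with related variants by Pupyrev) of the following shape: there exist functions $k$ and $f$ such that every graph $G$ admits a $(\leq k(\qn(G)))$-subdivision $G'$ with $\sn(G')\leq f(\qn(G))$. In other words, graphs of bounded queue-number admit bounded-depth subdivisions of bounded stack-number. This is precisely the kind of statement that puts \cref{family} in tension with \cref{B_conj}.

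Once such an auxiliary result is in hand, the refutation is routine. For each $s$, let $G_s$ be a graph supplied by \cref{family} with $\qn(G_s)\leq 4$ and $\sn(G_s)>s$. Applying the auxiliary result with $q=4$ yields a $(\leq k_0)$-subdivision $G_s'$ of $G_s$ with $\sn(G_s')\leq C$, where $k_0:=k(4)$ and $C:=f(4)$ are constants independent of $s$. If \cref{B_conj} held with witnessing function $g$, then
\[
\sn(G_s) \,\leq\, g(\sn(G_s'),\,k_0) \,\leq\, \max_{c\in\{1,\ldots,C\}} g(c,k_0)
\]
for every $s$, which is a bound independent of $s$. This contradicts $\sn(G_s)>s$ for arbitrarily large $s$, so \cref{B_conj} is false.

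The main obstacle is pinning down and citing a precise version of the auxiliary subdivision result; the rest of the argument is purely formal, as the displayed inequality is a one-line contradiction and no new combinatorial ideas beyond \cref{family} are needed. One subtle point worth checking when applying the auxiliary statement is that the subdivision depth $k_0$ really depends only on $\qn(G_s)\leq 4$ and not on $|V(G_s)|$; otherwise the final bound would fail to be a constant in $s$. For the Dujmović--Wood style construction this is indeed the case, so the plan goes through.
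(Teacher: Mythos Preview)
Your proposal is correct and follows essentially the same route as the paper: invoke the Dujmovi\'c--Wood bridge lemma (every graph $G$ has a $3$-stack subdivision with $1+2\lceil\log_2\qn(G)\rceil$ division vertices per edge), apply it to the graphs from \cref{family} with $\qn\le 4$ to obtain a $5$-subdivision with stack-number at most $3$, and observe that \cref{B_conj} would then bound $\sn(G)$ by the constant $f(3,5)$, contradicting \cref{family}. The only difference is that the paper states the auxiliary lemma with explicit constants, whereas you leave $k_0$ and $C$ abstract; your caveat about the subdivision depth depending only on $\qn(G)$ is exactly the point that makes the argument work, and it is satisfied by the cited lemma.
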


\citet{DujWoo05} established a connection between this conjecture and the question of whether stack-number is bounded by queue-number. In particular, they showed that if
\cref{B_conj} was true, then stack-number would be bounded by queue-number. Since \cref{family} shows that stack-number is not bounded by queue-number, \cref{B_conj} is false. The proof of \citet{DujWoo05} is based on the following key lemma: every graph $G$ has a $3$-stack subdivision with $1+2 \ceil{\log_2\qn(G)}$ division vertices per edge. Applying this result to the graph $G=S_b\CartProd H_n$ in \cref{family},
the $5$-subdivision of $S_b\CartProd H_n$ has a $3$-stack layout. If \cref{B_conj} was true, then  $\sn(S_b\CartProd H_n)$ would be at most $f( 3,5)$, contradicting \cref{family}.





\subsection*{Is Queue-Number Bounded by Stack-Number? }

It remains open whether queues are more powerful than stacks; that is, whether queue-number is bounded by stack-number. Several results are known about this problem. \citet{HLR92} showed that every 1-stack graph has a 2-queue layout. \citet{DJMMUW20} showed that planar graphs have bounded queue-number. (Note that graph products also feature heavily in this proof.)\ Since 2-stack graphs are planar, this implies that 2-stack graphs have bounded queue-number. It is open whether 3-stack graphs have bounded queue-number. In fact, the case of three stacks is as hard as the general question. \citet{DujWoo05} proved that queue-number is bounded by stack-number if and only if 3-stack graphs have bounded queue-number. Moreover, if this is true then queue-number is bounded by a polynomial function of stack-number.

\section{Proof of \cref{Main}}

We now turn to the proof of our main result, the lower bound on $\sn(G)$, where $G:= S_b\CartProd H_n$. Consider a hypothetical $s$-stack layout $(\varphi,\prec)$ of $G$ where $n$ and $b$ are chosen sufficiently large compared to $s$ as detailed below. We begin with three lemmas that, for sufficiently large $b$, provide a large sub-star $S_d$ of $S_b$ for which the induced stack layout of $S_d\CartProd H_n$ is highly structured.

For each node $v$ of $S_b$, define $\pi_v$ as the permutation of $\{1,\ldots,n\}^2$ in which $(x_1,y_1)$ appears before $(x_2,y_2)$ if and only if $(v,(x_1,y_1))\prec (v,(x_2,y_2))$. The following lemma is an immediate consequence of the Pigeonhole Principle:

\begin{lem}\label{uniform_order}
    There exists a permutation $\pi$ of $\{1,\ldots,n\}^2$ and a set $L_1$ of leaves of $S_b$ of size $a\ge b/(n^2)!$ such that $\pi_{v}=\pi$ for each $v\in L_1$.
\end{lem}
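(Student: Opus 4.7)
The plan is to invoke the Pigeonhole Principle exactly as the paper indicates. For every leaf $v$ of $S_b$, the restriction of $\prec$ to the set $\{v\}\times\{1,\ldots,n\}^2$ is a total order, and forgetting the first coordinate yields a well-defined permutation $\pi_v$ of $\{1,\ldots,n\}^2$. The assignment $v\mapsto \pi_v$ is therefore a function from the $b$-element set of leaves of $S_b$ into the set of permutations of $\{1,\ldots,n\}^2$, which has cardinality $(n^2)!$.

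By pigeonhole, some permutation $\pi$ is attained by at least $\lceil b/(n^2)!\rceil \ge b/(n^2)!$ leaves. Setting $L_1 := \{v : \pi_v = \pi\}$ and $a := |L_1|$ gives the required set. I do not anticipate any obstacle here: the proof is a single application of pigeonhole over a finite codomain, and the lemma makes no claim about the interaction of $\pi$ with the order on the root copy of $H_n$ or with the relative order of the leaves in $L_1$ along $\prec$ — those stronger structural properties are what the following two lemmas (alluded to in the paragraph introducing this lemma) will extract by further refining $L_1$.
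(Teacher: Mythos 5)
Your proof is correct and is exactly the argument the paper intends: the paper states the lemma as ``an immediate consequence of the Pigeonhole Principle'' without further detail, and your single application of pigeonhole over the $(n^2)!$ possible permutations is precisely that argument.
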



For each leaf $v$ in $L_1$, let $\varphi_v$ be the edge colouring of $H_n$ defined by $\varphi_v(xy):=\varphi((v,x)(v,y))$ for each $xy\in E(H_n)$. Since $H_n$ has maximum degree $6$ and is not 6-regular, it has fewer than $3n^2$ edges.  Therefore there are fewer than $s^{3n^2}$ edge colourings of $H_n$ using $s$ colours.  Another application of the Pigeonhole Principle proves the following:

\begin{lem}\label{uniform_colour}
    There exists a subset $L_2\subseteq L_1$ of size $c\ge a/s^{3n^2}$
    and an edge colouring $\phi:E(H_n)\to\{1,\ldots,s\}$ such that $\varphi_v=\phi$ for each $v\in L_2$.
\end{lem}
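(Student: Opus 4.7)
The plan is a direct appeal to the Pigeonhole Principle, exactly as signalled by the paper's own remark and by close analogy with the proof sketch of \cref{uniform_order}. Each leaf $v \in L_1$ determines an edge colouring $\varphi_v : E(H_n) \to \{1,\ldots,s\}$, so the assignment $v \mapsto \varphi_v$ sends the $a$-element set $L_1$ into the finite set of all such colourings. If I can bound the size of that codomain, then some fibre of the map must contain at least $a$ divided by that bound elements, and I can take $L_2$ to be that fibre and $\phi$ to be the common colouring shared by its members.

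First I would invoke the edge-count bound $|E(H_n)| < 3n^2$, which has already been recorded in the paragraph preceding the statement: $H_n$ has $n^2$ vertices of maximum degree $6$, and because boundary vertices have strictly smaller degree the graph is not $6$-regular, so the handshake inequality gives $2|E(H_n)| < 6n^2$. Consequently the number of functions $E(H_n) \to \{1,\ldots,s\}$ is at most $s^{|E(H_n)|} < s^{3n^2}$. Applying the Pigeonhole Principle to the map $v \mapsto \varphi_v$ on $L_1$, there must exist some colouring $\phi : E(H_n) \to \{1,\ldots,s\}$ whose preimage
\[
    L_2 \;:=\; \{\, v \in L_1 : \varphi_v = \phi \,\}
\]
has cardinality at least $a/s^{3n^2}$, which is exactly the conclusion of the lemma.

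There is essentially no obstacle here: the statement is a bookkeeping step whose whole purpose is to replace the $a$ leaves of $L_1$ by a still-large sub-collection $L_2$ on which the induced layout of $S_d \CartProd H_n$ has a common edge-colouring structure, ready to be exploited by the later arguments. The only arithmetic input, $|E(H_n)| < 3n^2$, has already been supplied, so the proof consists only of spelling out this counting and applying pigeonhole.
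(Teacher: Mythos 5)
Your proposal is correct and is essentially identical to the paper's own argument: the paper likewise bounds $|E(H_n)|$ by $3n^2$ using the maximum degree and non-regularity of $H_n$, concludes there are fewer than $s^{3n^2}$ edge colourings, and applies the Pigeonhole Principle to the map $v\mapsto\varphi_v$ on $L_1$. Nothing is missing.
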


Let $S_{c}$ be the subgraph of $S_b$ induced by $L_2\cup\{r\}$. The preceding two lemmas ensure that, for distinct leaves $v$ and $w$ of $S_{c}$, the stack layouts of the isomorphic graphs $G[\{(v,p):p\in V(H_n)\}]$ and $G[\{(w,p):p\in V(H_n)\}]$ are identical. The next lemma is a statement about the relationship between the stack layouts of $G[\{(v,p):v\in V(S_{c})\}]$ and $G[\{(v,q):v\in V(S_{c})\}]$ for  distinct $p,q\in V(H_n)$.  It does not assert that these two layouts are identical but it does state that they fall into one of two categories.

\begin{lem}\label{forward_or_backward}
    There exists a sequence $u_1,\ldots,u_{d}\in L_2$ of length $d\ge c^{1/2^{n^2-1}}$ such that, for each $p\in V(H_n)$, either  $(u_1,p)\prec (u_2,p)\prec\cdots\prec (u_{d},p)$ or $(u_1,p)\succ (u_2,p)\succ\cdots\succ (u_{d},p)$.
\end{lem}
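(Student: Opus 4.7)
The plan is to prove this by iterated application of the Erdős--Szekeres theorem, one application per vertex of $H_n$, but with a single preliminary sort that saves one application and yields exactly the claimed exponent.

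Fix an arbitrary enumeration $p_1,p_2,\ldots,p_{n^2}$ of $V(H_n)$. For each $v\in L_2$ and each $p\in V(H_n)$, write $\mathrm{pos}(v,p)$ for the rank of $(v,p)$ in the order $\prec$. My first step would be to list the leaves of $L_2$ as $u^{(1)}_1,\ldots,u^{(1)}_c$ in increasing order of $\mathrm{pos}(\cdot,p_1)$. By construction the sequence $\mathrm{pos}(u^{(1)}_i,p_1)$, $i=1,\ldots,c$, is strictly increasing.

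Next I would iterate for $k=2,3,\ldots,n^2$. Given a sequence $u^{(k-1)}_1,\ldots,u^{(k-1)}_{c_{k-1}}$ of length $c_{k-1}$, consider the sequence of real numbers $\mathrm{pos}(u^{(k-1)}_i,p_k)$, $i=1,\ldots,c_{k-1}$. By the Erdős--Szekeres theorem this sequence contains a monotone (increasing or decreasing) subsequence of length at least $\lceil\sqrt{c_{k-1}}\rceil$; let $u^{(k)}_1,\ldots,u^{(k)}_{c_k}$ be the corresponding subsequence of leaves, so that $\mathrm{pos}(u^{(k)}_i,p_k)$ is monotone in $i$. The crucial observation, to be stated explicitly, is that any subsequence of a monotone sequence is itself monotone (in the same direction); hence at stage $k$ the sequence $\mathrm{pos}(u^{(k)}_i,p_j)$ is monotone in $i$ for every $j\le k$ simultaneously.

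Setting $u_i:=u^{(n^2)}_i$ and $d:=c_{n^2}$ then yields a sequence with the required monotonicity property for every $p\in V(H_n)$. The length bound follows from the recursion $c_k\ge c_{k-1}^{1/2}$ with $c_1=c$, giving $c_k\ge c^{1/2^{k-1}}$, so that in particular $d\ge c^{1/2^{n^2-1}}$. Finally, $(u_i,p)\prec(u_{i+1},p)$ for all $i$ or $(u_i,p)\succ(u_{i+1},p)$ for all $i$ is exactly the statement that the position sequence is monotone. There is no real obstacle here; the only subtle point is the initial sort by $p_1$, which absorbs the first Erdős--Szekeres step and is responsible for the exponent being $2^{n^2-1}$ rather than $2^{n^2}$.
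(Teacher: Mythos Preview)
Your argument is correct and is essentially identical to the paper's proof: enumerate the vertices of $H_n$, sort $L_2$ by position with respect to $p_1$, then apply Erd\H{o}s--Szekeres once for each of $p_2,\ldots,p_{n^2}$, observing that monotonicity with respect to earlier $p_j$ is preserved under passing to subsequences. The paper uses the same initial sort to save one application and obtains the same bound $d\ge c^{1/2^{n^2-1}}$.
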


\begin{proof}
    Let $p_1,\ldots,p_{n^2}$ denote the vertices of $H_n$ in any order.
    Begin with the sequence $V_1:=v_{1,1},\ldots,v_{1,c}$ that contains all $c$ elements of $L_2$ ordered so that $(v_{1,1},p_1)\prec\cdots\prec(v_{1,c},p_1)$.  For each $i\in\{2,\ldots,n^2\}$, the Erd\H{o}s-Szekeres Theorem~\citep{ES35} implies that $V_{i-1}$ contains a subsequence $V_i:=v_{i,1},\ldots,v_{i,|V_i|}$ of length $|V_i|\ge \sqrt{|V_{i-1}|}$ such that $(v_{i,1},p_i)\prec\cdots\prec(v_{i,|V_i|},p_i)$ or $(v_{i,1},p_i)\succ\cdots\succ(v_{i,|V_i|},p_i)$.  It is straightforward to verify by induction on $i$ that $|V_i| \ge c^{1/2^{i-1}}$ resulting in a final sequence $V_{n^2}$ of length at least $c^{1/2^{n^2-1}}$.    
\end{proof}

For the rest of the proof we work with the star $S_d$ whose leaves are $u_1,\ldots,u_d$ described in \cref{forward_or_backward}.  Consider the (improper) colouring of $H_n$ obtained by colouring each vertex $p\in V(H_n)$ \emph{red} if $(u_1,p)\prec\cdots\prec (u_d,p)$ and colouring $p$ \emph{blue} if $(u_1,p)\succ\cdots\succ(u_d,p)$. We need the following famous Hex Lemma~\citep{Gale79}.

\begin{lem}[\citep{Gale79}] \label{hex_lemma}
Every vertex 2-colouring of $H_n$ contains a monochromatic path on $n$ vertices.
\end{lem}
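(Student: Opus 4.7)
The statement is the classical Hex theorem of \citet{Gale79}. The plan is to prove the slightly stronger dichotomy: for every red/blue vertex $2$-colouring of $H_n$, either the red vertices contain a connected subgraph meeting both row $y=1$ and row $y=n$, or the blue vertices contain a connected subgraph meeting both column $x=1$ and column $x=n$. Any connected subgraph of $H_n$ joining the top and bottom rows contains a path that visits all $n$ distinct $y$-coordinates, since consecutive path vertices differ in $y$ by at most $1$, so such a path has at least $n$ vertices; the symmetric claim holds for a blue left-to-right connection. The lemma follows immediately from either alternative.

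To establish the dichotomy, I would use a standard planar boundary-tracing argument. Draw $H_n$ in the plane with $(x,y)$ placed at its namesake grid point, so that every bounded face is one of the triangles $\{(x,y),(x+1,y),(x+1,y+1)\}$ or $\{(x,y),(x,y+1),(x+1,y+1)\}$. Augment the drawing with four phantom vertices outside the square, one on each side, colouring the top and bottom phantoms red and the left and right phantoms blue, and joining each phantom to every boundary vertex on its side. The crucial combinatorial observation is that every bounded triangular face has at most two edges with differently coloured endpoints, because its three vertices receive only two colours in total. Consequently, the set of such bichromatic edges forms a collection of simple arcs and cycles in the planar dual. Near each of the four outer corners of the augmented drawing, exactly one such arc begins; following it from one corner it must terminate at some other corner, and a Jordan-curve argument then shows that on one side of the resulting arc the two phantom vertices of a common colour are linked through the graph by vertices of that colour, forcing the corresponding monochromatic connection in $H_n$.

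The step I expect to require the most care is converting the topological separation argument into a clean combinatorial statement inside $H_n$. The diagonal edges $(x,y)(x+1,y+1)$ are essential here: without them, a $2$-coloured square face with opposite corners of each colour would admit two valid traversals of the boundary curve, and the trace would fail to be canonical. The triangulation of $H_n$ forbids this ambiguity and forces the trace to be well-defined. Once this point is handled, the rest of the argument is routine planar topology, and the length bound on the monochromatic path follows directly from the coordinate-jump bound on edges of $H_n$.
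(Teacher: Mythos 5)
The paper offers no proof of this lemma at all --- it is quoted directly from \citet{Gale79} --- so the comparison here is really between your sketch and the classical argument in the cited source, and they essentially coincide. Your outline is the standard Hex-theorem proof: triangular faces carry $0$ or $2$ bichromatic edges, so the bichromatic edges form arcs and cycles in the dual; the four corner arcs pair up; and tracing one arc yields either a red top--bottom connection or a blue left--right connection. The one ingredient you add beyond Hex itself --- that a connected red subgraph meeting rows $y=1$ and $y=n$ contains a simple path which, because every edge of $H_n$ changes the $y$-coordinate by at most $1$, must visit all $n$ values of $y$ and hence have at least $n$ vertices --- is exactly the right reduction and is correct. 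Your remark that the diagonal edges are what make the trace canonical (and that the lemma is false for the undiagonalised grid) is also on point. The only parts left implicit are routine: that arcs can terminate only at the four corners (a parity count on dual degrees), and that the red endpoints of consecutive bichromatic edges along a traced arc are equal or adjacent, so that the red side of the arc is genuinely connected in $H_n$. These are standard and do not constitute a gap.
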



Apply \cref{hex_lemma} with the above-defined colouring of $H_n$. We obtain a path subgraph  $P=(p_1,\ldots,p_n)$ of $H_n$ that, without loss of generality, consists entirely of red vertices; thus $(u_1,p_j)\prec\cdots\prec (u_d,p_j)$ for each $j\in\{1,\ldots,n\}$.  Let $X$ be the subgraph $S_d \CartProd P$ of $G$.

\begin{lem}\label{twister}
$X$ contains a set of at least $\min\{\lfloor d/2^{n}\rfloor,\lceil n/2\rceil\}$ pairwise crossing edges with respect to $\prec$.
\end{lem}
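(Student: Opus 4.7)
The plan is to construct a family of pairwise crossing star edges of the form $(r,p_j)(u_i,p_j)$ through three steps: a binary halving of the leaf set over the $n$ positions of $P$, a pigeonhole on the side of the root, and a matching between positions and leaves that yields the crossings.

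For the halving, for each $j\in\{1,\ldots,n\}$ let $r_j\in\{0,1,\ldots,d\}$ denote the rank of $(r,p_j)$ in the stack $(u_1,p_j)\prec\cdots\prec(u_d,p_j)$. Starting from $I^{(0)}:=\{1,\ldots,d\}$, process $j=1,2,\ldots,n$ in order while maintaining the invariant that $I^{(j)}$ is a contiguous sub-interval of $\{1,\ldots,d\}$. At step $j$, if $r_j$ lies strictly inside $I^{(j-1)}$ then split $I^{(j-1)}$ at $r_j$ and keep the larger half; otherwise leave $I^{(j-1)}$ unchanged. Each non-trivial step halves the set, so $|I^{(n)}|\ge\lfloor d/2^n\rfloor$. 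By construction, for every $j$ the sub-stack $\{(u_i,p_j):i\in I^{(n)}\}$ lies entirely on one side of $(r,p_j)$ in $\prec$; tag each $p_j$ with the corresponding sign $\epsilon_j\in\{+,-\}$ and pigeonhole to obtain a set $J\subseteq\{p_1,\ldots,p_n\}$ of at least $\lceil n/2\rceil$ positions sharing a common sign, without loss of generality $+$, so $(r,q)\prec(u_i,q)$ for every $q\in J$ and $i\in I^{(n)}$.

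Now set $t:=\min\{\lfloor d/2^{n}\rfloor,\lceil n/2\rceil\}$. Choose $t$ positions $q_1,\ldots,q_t\in J$ and $t$ leaves $i_1,\ldots,i_t\in I^{(n)}$, and form the star edges $e_k:=(r,q_k)(u_{i_k},q_k)$. After ordering the edges so that the roots appear in $\prec$-order, the $e_k$ are pairwise crossing if and only if (i) every root precedes every chosen leaf endpoint in $\prec$, and (ii) the induced $\prec$-order of the right endpoints matches that of the left endpoints. A natural first attempt is to order columns by the common permutation $\pi$ and take $i_1<\cdots<i_t$ inside $I^{(n)}$: the two-dimensional monotonicity of $\{(u_i,q):i\in I^{(n)},\,q\in J\}$ (monotone in the leaf index at each column and in $\pi$ across columns) then guarantees condition~(ii).

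The principal obstacle is condition~(i), a cross-column comparison not directly controlled by the within-column halving; the naive approach above combined with an Erd\H{o}s--Szekeres step on the root-order only yields about $\sqrt{n/2}$ pairwise crossings, whereas the lemma claims $\lceil n/2\rceil$. Closing this gap is the technical heart of the lemma, and should use the contiguity of $I^{(n)}$ as an interval of leaf indices — so that for any two columns $q,q'\in J$ the long monotone sequences $\{(u_i,q):i\in I^{(n)}\}$ and $\{(u_i,q'):i\in I^{(n)}\}$ can be interleaved controllably in $\prec$ — together with a diagonal matching between columns and leaves designed so that the roots are forced to lie before every chosen leaf endpoint. Once such a matching is in place, pairwise crossing is routine to verify from the $\prec$-comparisons above and the red monotonicity of $P$ from \cref{forward_or_backward}.
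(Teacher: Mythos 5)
Your proof is incomplete at exactly the point where the real work lies, and you say so yourself: condition (i), the cross-column comparison $(r,q_\ell)\prec (u_{i_k},q_k)$, is never established. The first two steps (halving the leaf-index interval at the rank of $(r,p_j)$ in each column, then pigeonholing on the side of the root) only control \emph{within-column} comparisons. Nothing in your construction prevents, say, all vertices of column $q_1$ (root included) from preceding all vertices of column $q_2$ in $\prec$, in which case your star edges in those two columns are disjoint, non-nested and non-crossing, and the whole scheme collapses. The suggested repair --- ``contiguity of $I^{(n)}$'' plus a ``diagonal matching'' --- cannot work as stated, because contiguity of an interval of leaf \emph{indices} says nothing about how the corresponding vertex sets of different columns are placed relative to one another in $\prec$. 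A correct proof must use an ingredient your argument never touches: the uniform edge colouring $\phi$ from \cref{uniform_colour}. That is the only hypothesis that constrains cross-column structure.

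For comparison, the paper's proof gets its $2^{-n}$ loss not from the root's rank but from the following mechanism. For consecutive path vertices $p_{i-1},p_i$, all the edges $(v,p_{i-1})(v,p_i)$ over leaves $v$ lie in the same stack $\phi(p_{i-1}p_i)$, hence pairwise do not cross; combined with the common direction (all columns red) this forces the two columns to interleave \emph{perfectly}: $(v_1,p_{i-1})\prec(v_1,p_i)\prec(v_2,p_{i-1})\prec(v_2,p_i)\prec\cdots$ (up to swapping the roles of $p_{i-1}$ and $p_i$). Discarding every other leaf then produces, by induction along $P$, a family of $d'\ge d/2^{n-1}$ pairwise \emph{separated} blocks $Z_{n,v}$, each containing one vertex from every column. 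Separation of these blocks is precisely the cross-column control you are missing: once the blocks occur as disjoint intervals $Z_{n,v_1}\prec\cdots\prec Z_{n,v_{d'}}$ of $\prec$, a single median comparison between $r_{\lceil n/2\rceil}$ and the middle block yields $\min\{\lfloor d'/2\rfloor,\lceil n/2\rceil\}$ pairwise crossing root-to-block edges. Your root-rank halving and sign pigeonhole are not needed in that argument, and they cannot substitute for the interleaving claim. As it stands the proposal does not prove the lemma.
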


\begin{proof}
	Extend the total order $\prec$ to a partial order over subsets of $V(G)$, where for all $V,W\subseteq V(G)$, we have $V\prec W$ if and only if $v\prec w$ for each $v\in V$ and each $w\in W$.  We abuse notation slightly by using $\prec$ to compare elements of $V(G)$ and subsets of $V(G)$ so that, for $v\in V(G)$ and $V\subseteq V(G)$, $v\prec V$ denotes $\{v\}\prec V$.
    We will define sets $A_1\supseteq \cdots\supseteq A_{n}$ of leaves of $S_d$ so that each $A_i$ satisifies the following conditions:
    \begin{compactenum}[(C1)]
        \item $A_i$ contains $d_i\ge d/2^{i-1}$ leaves of $S_d$.
        \item Each leaf $v\in A_i$ defines an $i$-element vertex set $Z_{i,v}:=\{(v,p_j):j\in\{1,\ldots,i\}\}$.  For any distinct $v,w\in A_i$, the sets $Z_{i,v}$ and $Z_{i,w}$ are \emph{separated} with respect to $\prec$; that is, $Z_{i,v}\prec Z_{i,w}$ or $Z_{i,v}\succ Z_{i,w}$.
    \end{compactenum}

    Before defining $A_1,\ldots,A_n$ we first show how the existence of the set $A_n$ implies the lemma.  To avoid triple-subscripts, let $d':=d_n\ge d/2^{n-1}$. By (C2), the set $A_n$ defines vertex sets $Z_{n,v_1}\prec\cdots\prec Z_{n,v_{d'}}$ (see \cref{fig_twister}). The root $r$ of $S_b$ is adjacent to each of $v_{1},\ldots,v_{d'}$ in $S_d$. Thus, for each $j\in\{1,\ldots,n\}$ and each $i\in\{1,\ldots,d'\}$, the edge $(r,p_j)(v_i,p_j)$ is in $X$. Hence, $(r,p_j)$ is adjacent to an element of each of $Z_{n,v_1},\ldots,Z_{n,v_{d'}}$.
	\begin{figure}[!h]
		\centering\includegraphics{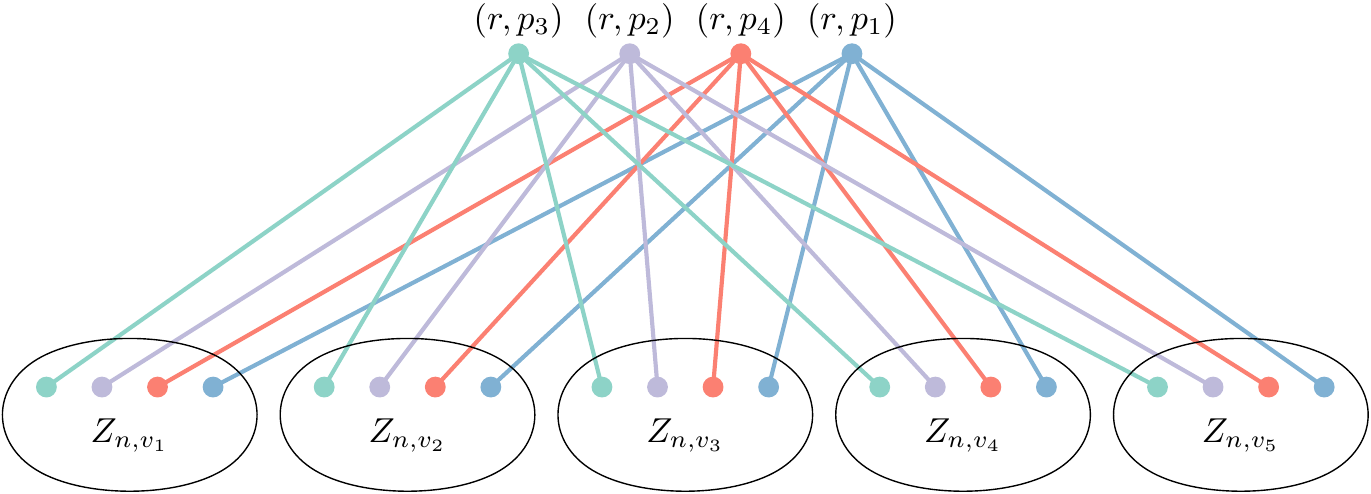}
		\caption{The sets $Z_{n,v_1},\ldots,Z_{n,v_{d'}}$ where $n=4$ and $d'=5$.}
		\label{fig_twister}
	\end{figure}

   Since $Z_{n,v_1},\ldots,Z_{n,v_{d'}}$ are separated with respect to $\prec$,
   if we imagine identifying the vertices in each set $Z_{n,v_i}$, this situation looks like a complete bipartite graph $K_{n,d'}$ with the root vertices $L:=\{(r,p_j):j\in\{1,\ldots,n\}\}$ in one part and the groups $R:=Z_{n,v_1}\cup\cdots\cup Z_{n,v_{d'}}$ in the other part.  Any linear ordering of $K_{n,d'}$ has a large set of pairwise crossing edges. So, intuitively, the induced subgraph $X[L\cup R]$ should also have a large set of pairwise crossing edges. 
   
   We formalize this idea as follows: Label the vertices in $L$ as $r_1,\ldots,r_n$ so that $r_1\prec \cdots\prec r_{n}$.  Then at least one of the following two cases applies (see \figref{median}):
    \begin{enumerate}
        \item $Z_{n,\lfloor d'/2\rfloor}\prec r_{\lceil n/2\rceil}$ in which case the graph between $r_{\lceil n/2\rceil},\ldots,r_{n}$ and $Z_{n,1},\ldots,Z_{n,\lfloor d'/2\rfloor}$ has a set of at least $\min\{\lfloor d'/2\rfloor,\lceil n/2\rceil\}$ pairwise-crossing edges.
        \item $r_{\lceil n/2\rceil}\prec Z_{\lceil d'/2\rceil+1}$ in which case the graph between $r_1,\ldots,r_{\lceil n/2\rceil}$ and $Z_{\lceil d'/2\rceil+1},\ldots,Z_{d'}$ has a set of $\min\{\lfloor d'/2\rfloor,\lceil n/2\rceil\}$ pairwise-crossing edges.
    \end{enumerate}
	\begin{figure}[!h]
		\centering
			\includegraphics{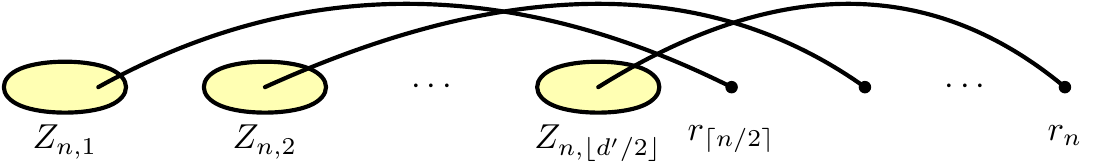} \\[2em]
			\includegraphics{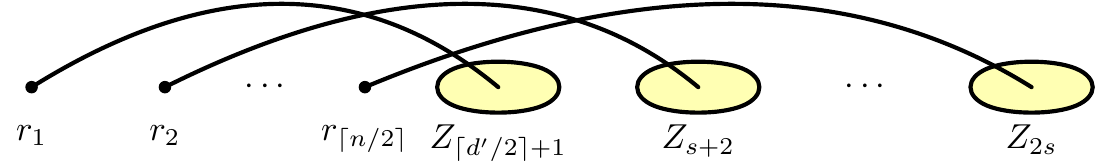}
		\caption{The two cases in the proof of \cref{twister}.}
		\figlabel{median}
	\end{figure}
	Since, by (C1), $d'\ge d/2^{n-1}$, either case results in a set of pairwise-crossing edges of size at least $\min\{\lfloor d/2^n\rfloor,\lceil n/2\rceil\}$, as claimed.
	
    It remains to define the sets $A_1\supseteq\cdots\supseteq A_n$ that satisfy (C1) and (C2).  Let $A_1$ be the set of all the leaves of $S_d$.  For each $i\in\{2,\ldots,n\}$, assuming that $A_{i-1}$ is already defined, the set $A_i$ is defined as follows: For brevity, let $m:= |A_{i-1}|$. Let $Z_1,\ldots,Z_m$ denote the sets $Z_{i-1, v}$ for each $v\in A_{i-1}$ ordered so that $Z_1\prec\cdots\prec Z_m$. By Property (C2), this is always possible. Label the vertices of $A_{i-1}$ as $v_1,\ldots,v_m$ so that $(v_1,p_{i-1})\prec\cdots\prec (v_m  ,p_{i-1})$.   (This is equivalent to naming them so that $(v_j,p_{i-1})\in Z_j$ for each $j\in\{1,\ldots,m\}$.)  Define the set $A_i:=\{v_{2k+1}:k\in\{0,\ldots,\lfloor(m-1)/2\rfloor\}\}=\{v_{j}\in A_{i-1}:\text{$j$ is odd}\}$.  This completes the definition of $A_1,\ldots,A_n$.

	We now verify that $A_i$ satisfies (C1) and (C2) for each $i\in\{1,\ldots,n\}$.  We do this by induction on $i$. The base case $i=1$ is trivial, so now assume that $i\in\{2,\ldots,n\}$.   To see that $A_i$ satisfies (C1) observe that $|A_i|=\lceil |A_{i-1}|/2\rceil \ge |A_{i-1}|/2\ge d/2^{i-1}$, where the final inequality follows by applying the inductive hypothesis $|A_{i-1}|\ge d/2^{i-2}$.  Now it remains to show that $A_i$ satisfies (C2). Again, let $m:= |A_{i-1}|$. 

    Recall that, for each $v\in A_{i-1}$, the edge $e_v:=(v,p_{i-1})(v,p_i)$ is in $X$.  We have the following properties:
    \begin{compactenum}[(P1)]
        \item By \cref{uniform_colour}, $\varphi(e_v)=\phi(p_{i-1}p_i)$ for each $v\in A_{i-1}$, 
        \item Since $p_{i-1}$ and $p_i$ are both red,  for each $v,w\in A_{i-1}$, we have $(v,p_{i-1})\prec (w,p_{i-1})$ if and only if $(v,p_{i})\prec (w,p_{i})$.
        \item By \cref{uniform_order}, $(v,p_{i-1})\prec (v,p_i)$ for every $v\in A_{i-1}$ or $(v,p_{i-1})\succ (v,p_i)$ for every $v\in A_{i-1}$.
    \end{compactenum}
We claim that these three conditions imply that the vertex sets $\{(v,p_{i-1}):v\in A_{i-1}\}$ and $\{(v,p_i):v\in A_{i-1}\}$ interleave perfectly with respect to $\prec$.     More precisely:
    
	\begin{clm}\clmlabel{interleave} 
		$(v_1,p_{i-1+t})\prec (v_1,p_{i-t}) \prec (v_2,p_{i-1+t}) \prec (v_2,p_{i-t}) \cdots \prec (v_m,p_{i-1+t}) \prec (v_m,p_{i-t})$ 
		for some $t\in\{0,1\}$.
	\end{clm}
	
	\begin{proof}[Proof of \clmref{interleave}]
		By (P3) we may assume, without loss of generality, that $(v,p_{i-1})\prec (v,p_i)$ for each $v\in A_{i-1}$, in which case we are trying to prove the claim for $t=0$.  Therefore, it is sufficient to show that $(v_j,p_i)\prec (v_{j+1},p_{i-1})$ for each $j\in\{1,\ldots,m-1\}$.  For the sake of contradiction, suppose $(v_j,p_{i})\succ (v_{j+1},p_{i-1})$ for some $j\in\{1,\ldots,m-1\}$. By the labelling of $A_{i-1}$,  $(v_j,p_{i-1})\prec (v_{j+1},p_{i-1})$ so, by (P2),  $(v_{j},p_i) \prec (v_{j+1},p_i)$.  Therefore
		\[
			(v_j,p_{i-1})\prec (v_{j+1},p_{i-1})\prec(v_{j},p_i) \prec
		   (v_{j+1}, p_i) \enspace .
	   	\]
		Therefore the edges $e_{v_j}=(v_j,p_{i-1})(v_j,p_{i})$ and $e_{v_{j+1}}=(v_{j+1},p_{i-1})(v_{j+1},p_i)$ cross with respect to $\prec$.  But this is a contradiction since, by (P1),  $\varphi(e_{v_j}) =\varphi(e_{v_{j+1}})=\phi(p_{i-1}p_i)$.
		This contradiction completes the proof of \clmref{interleave}.
	\end{proof}

We now complete the proof that $A_i$ satisfies (C2). Apply \clmref{interleave} and assume without loss of generality that $t=0$, so that
	\[
		(v_1,p_{i-1})\prec (v_1,p_{i}) \prec (v_2,p_{i-1}) \prec (v_2,p_{i}) \cdots \prec (v_m,p_{i-1}) \prec (v_m,p_{i}) \enspace .
	\]

    For each $j\in\{1,\ldots,m-2\}$, we have $(v_{j+1},p_{i-1})\in Z_{j+1}\prec Z_{j+2}$, so  $(v_j,p_i)\prec (v_{j+1},p_{i-1}) \prec Z_{j+2}$.  Therefore $Z_j\cup\{(v_j,p_i)\} \prec Z_{j+2}$.  By a symmetric argument, $Z_j\cup\{(v_j,p_i)\} \succ Z_{j-2}$ for each  $j\in\{3,\ldots,m\}$.  Finally, since $(v_{j},p_i)\prec (v_{j+2},p_i)$ for each odd $i\in\{1,\ldots,m\}$, we have $Z_{j}\cup\{(v_j,p_i)\} \prec Z_{j+2}\cup\{(v_{j+2},p_i)\}$ for each odd $j\in\{1,\ldots,m-2\}$.  Thus $A_i$ satisfies (C2) since the sets $Z_1\cup\{(v_1,p_i)\},Z_3\cup\{(v_3,p_i)\},\ldots,Z_{2\lfloor (m-1)/2\rfloor+1} \cup (v_{2\lfloor (m-1)/2\rfloor+1},p_i)$ are precisely the sets $Z_{i,1},\ldots,Z_{i,d_i}$ determined by our choice of $A_i$.
\end{proof}

%
%

\begin{proof}[Proof of \cref{Main}]
Let $G := S_b \CartProd H_n$, where $n :=2s+1$ and $b := (n^2)!\, s^{3n^2}\, ((s+1)2^n)^{2^{n^2-1}} $. Suppose that $G$ has an $s$-stack layout  $(\varphi,\prec)$. In particular, there are no $s+1$ pairwise crossing edges in $G$ with respect to $\prec$. By \cref{uniform_colour,uniform_order,forward_or_backward}, we have $a\ge b/(n^2)! = s^{3n^2}\, ((s+1)2^n)^{2^{n^2-1}}$ and $c\ge a/s^{3n^2} \geq ((s+1)2^n)^{2^{n^2-1}}$ and
$d\ge c^{1/2^{n^2-1}} \ge (s+1)2^n$. By \cref{twister}, the graph $X$, which is a subgraph of $G$, contains $\min\{\lfloor d/2^{n}\rfloor,\lceil n/2\rceil\}=s+1$ pairwise crossing edges with respect to $\prec$. This contradictions shows that $\sn(G)> s$.
\end{proof}

\section{Reflections}

We now mention some further consequences and open problems that arise from our main result. 

\citet{NOW11} proved that graph classes with bounded stack-number or bounded queue-number have bounded expansion; see \citep{Sparsity} for background on bounded expansion classes. The converse is not true, since cubic graphs (for example) have bounded expansion, unbounded stack-number~\citep{Malitz94a} and unbounded queue-number~\citep{Wood-QueueDegree}. However, prior to the present work it was open whether graph classes with polynomial expansion have bounded stack-number or bounded queue-number. It follows from the work of \citet[Theorem~19]{DHJLW21} that $(S_b \CartProd H_n)_{b,n\in\N}$ has polynomial expansion. So \cref{Main} implies there is a class of graphs with polynomial expansion and with unbounded stack-number. It remains open whether graph classes with polynomial expansion have bounded queue-number. See \citep{DJMMUW20,DMW} for several examples of graph classes with polynomial expansion and bounded queue-number.

Our main result also resolves a question of \citet{BGKTWb} concerning \emph{sparse twin-width}; see \citep{BKTW,BGKTWa,BGKTWb} for the definition and background on (sparse) twin-width. \citet{BGKTWb} proved that graphs with bounded stack-number have bounded sparse twin-width, and they write that they ``believe that the inclusion is strict''; that is, there exists a class of graphs with bounded sparse twin-width and unbounded stack-number. \Cref{Main} confirms this intuition, since the class of all subgraphs of $(S_b \CartProd H_n)_{b,n\in\N}$ has bounded sparse twin-width (since \citet{BGKTWb} showed that any hereditary class of graphs with bounded queue-number has bounded sparse twin-width). 
It remains open whether bounded sparse twin-width coincides with bounded queue-number.

Finally, we mention some more open problems: 

\begin{compactitem}
\item Recall that every 1-queue graph has a 2-stack layout \citep{HLR92} and we proved that there are 4-queue graphs with unbounded stack-number. The following questions remain open: Do 2-queue graphs have bounded stack-number? Do 3-queue graphs have bounded stack-number? 

\item Since $H_n\subseteq P \boxtimes P$ where $P$ is the $n$-vertex path, \cref{family} implies that $\sn(S\boxtimes P\boxtimes P)$ is unbounded for stars $S$ and paths $P$. It is easily seen that $\sn(S\boxtimes P)$ is bounded~\citep{Pupyrev20}. The following question naturally arises (independently asked by \citet{Pupyrev20}): Is $\sn(T \boxtimes P)$ bounded for all trees $T$ and paths $P$? We conjecture the answer is ``no''.

\end{compactitem}

\subsubsection*{Acknowledgement} Thanks to \'Edouard Bonnet for a helpful comment. 

{
\fontsize{11.3}{12.5}
\selectfont 

\let\oldthebibliography=\thebibliography
\let\endoldthebibliography=\endthebibliography
\renewenvironment{thebibliography}[1]{%
\begin{oldthebibliography}{#1}%
\setlength{\parskip}{0.2ex}%
\setlength{\itemsep}{0.2ex}%
}{\end{oldthebibliography}}


\def\soft#1{\leavevmode\setbox0=\hbox{h}\dimen7=\ht0\advance \dimen7
	by-1ex\relax\if t#1\relax\rlap{\raise.6\dimen7
		\hbox{\kern.3ex\char'47}}#1\relax\else\if T#1\relax
	\rlap{\raise.5\dimen7\hbox{\kern1.3ex\char'47}}#1\relax \else\if
	d#1\relax\rlap{\raise.5\dimen7\hbox{\kern.9ex \char'47}}#1\relax\else\if
	D#1\relax\rlap{\raise.5\dimen7 \hbox{\kern1.4ex\char'47}}#1\relax\else\if
	l#1\relax \rlap{\raise.5\dimen7\hbox{\kern.4ex\char'47}}#1\relax \else\if
	L#1\relax\rlap{\raise.5\dimen7\hbox{\kern.7ex
			\char'47}}#1\relax\else\message{accent \string\soft \space #1 not
		defined!}#1\relax\fi\fi\fi\fi\fi\fi}

}

\end{document}